\documentclass[english,11pt]{article}

\usepackage{amssymb,amsmath,amsfonts,amsthm,rotating,setspace,graphicx,float}
\usepackage{amssymb,amsmath,amsthm,rotating,setspace}
\usepackage{enumerate}

\usepackage{latexsym}

\usepackage{epsfig}

\usepackage{color}

\topmargin -0.5in
\textheight 9.1in
\textwidth 6.5in
\oddsidemargin 0.0in
\evensidemargin 0.0in

\newtheorem{theorem}{Theorem}[section]
\newtheorem{lemma}[theorem]{Lemma}

\theoremstyle{definition}

\theoremstyle{definition}
\newtheorem{remark}[theorem]{Remark}

\theoremstyle{proposition}
\newtheorem{proposition}[theorem]{Proposition}

\numberwithin{equation}{section}

\def\eq#1{(\ref{#1})}

\def\R{\mathbb{R}}



\begin{document}

\title{Regularity of Weak Solutions for Singular Elliptic Problems Driven by $m$-Laplace Operator}

\author{{\large Gurpreet Singh}\\School of Mathematics and Statistics\\
 University College Dublin,\\ Belfield, Dublin 4, Ireland\\E-mail: {\tt gurpreet.singh@ucdconnect.ie}}







\maketitle

\begin{abstract}
We obtain optimal regularity in the Sobolev space $W_0^{1,\tau}(\Omega)$ for the unique solution of 
$$
-\Delta_m u=K(x)u^{-p} \mbox{ in } \Omega, \quad u=0\mbox{ on }\partial \Omega.
$$ 
Here $\Omega\subset{\mathbb R}^N$ is a smooth and bounded domain, $m>1$, $p\geq 0$ and $K\in C(\Omega)$ is a positive function that behaves like ${\rm dist}(x,\partial\Omega)^{-q}$ for some $q\geq 0$ with $p+q< 2-\frac{1-p}{m}$. 

We obtain that the unique weak solution to the above problem belongs to   $W_0^{1,\tau}(\Omega)$ for 
$$
m\leq \tau<\frac{m+p-1}{p+q-1}\quad \mbox{ if }p+q>1,
$$
and
$$
m\leq \tau<\infty\quad \mbox{ if }p+q=1.
$$
The above range of $\tau$ is optimal.
\end{abstract}


\section{Introduction and the main result}

In this note we are interested in the study of regularity of the weak solution to 
\begin{equation}\label{eq8}
\left\{
\begin{aligned}
-\Delta_{m} u&= K(x)u^{-p} &&\quad\mbox{ in }\Omega,\\
u&= 0 &&\quad\mbox{ on }\partial{\Omega}.
\end{aligned}
\right.
\end{equation}
Here $\Omega\subset{\mathbb R}^N$ is a smooth and bounded domain, $m>1$, $p\geq 0$ and $K\in C(\Omega)$ is a positive function that behaves like $\delta(x)^{-q}$ for some $q\geq 0$, with $p+q< 2-\frac{1-p}{m}$ where $\delta (x):={\rm dist}(x,\partial\Omega)$. The existence of a weak solution $u\in W^{1,m}_0(\Omega)\cap C^{1,\alpha}(\overline\Omega)$ (if $p+q<  1$) or $u\in W^{1,m}_0(\Omega)\cap C^{0,\beta}(\overline\Omega)$ (if $p+q\geq 1$) has been obtained in \cite{BGH2015, GMS2012, GST2015} for some $\alpha$, $\beta \in (0, 1)$. We are here interested in the optimal $W^{1,\tau}_0(\Omega)$ regularity of the unique solution.

\medskip

In this note, for any two functions $f$ and $g$ defined on $\Omega$ we shall write $f\sim g$ to denote that
$$
c_{1}\leq \frac{f}{g}\leq c_{2} \quad\mbox{ in }\Omega,
$$
for some positive constants $c_{1}$ and $c_{2}$. 

\medskip

Our main result in this note is as follows:

\begin{theorem}\label{tm1}
Let $p\geq 0$ and $K: \Omega \rightarrow (0, \infty)$ be a continuous function such that $K(x)\sim \delta(x)^{-q}$, $p+q< 2-\frac{1-p}{m}$. Then, the problem 
\begin{equation}\label{eq8a}
\left\{
\begin{aligned}
-\Delta_{m} u&= K(x)u^{-p} &&\quad\mbox{ in }\Omega,\\
u&= 0 &&\quad\mbox{ on }\partial{\Omega},
\end{aligned}
\right.
\end{equation}
has a unique solution $u\in W_{0}^{1, m}(\Omega)$ and:
\begin{itemize}
\item[{\rm (i)}] If $p+q<1$ then $u\in C^{1, \alpha}(\overline{\Omega})$ for some $\alpha \in (0, 1)$ and $u(x)\sim \delta(x)$.
\item[{\rm (ii)}] If $p+q=1$ then $u\in W_{0}^{1, \tau}(\Omega)\cap C^{0, \beta}(\overline{\Omega})$ for some $\beta \in (0, 1)$ and for all $m\leq \tau< \infty$. Also $u\sim \delta(x) \log^{\frac{1}{m+p-1}}\Big(\frac{1}{\delta(x)}\Big)$. 
\item[{\rm (iii)}] If $p+q>1$ then $u\in W_{0}^{1, \tau}(\Omega)\cap C^{0, \beta}(\overline{\Omega})$ for some $\beta \in (0, 1)$ and for all $m\leq \tau< \frac{m+p-1}{p+q-1}$. Also $u\sim \delta(x)^{\frac{m-q}{m+p-1}}$. 
\end{itemize}
\end{theorem}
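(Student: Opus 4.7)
The strategy is to reduce Theorem~\ref{tm1} to sharp two-sided pointwise estimates for $u$ near $\partial\Omega$. Existence of $u\in W^{1,m}_0(\Omega)$ with the indicated H\"older regularity up to $\partial\Omega$ is already quoted from \cite{BGH2015,GMS2012,GST2015}, so what remains is to establish (a) uniqueness, (b) the boundary asymptotic $u\sim\phi(\delta)$ for the correct profile, and (c) the sharp $W^{1,\tau}_0$ integrability together with its optimality. For (a), a Picone/D\'iaz--Saa argument adapted to the $m$-Laplacian suffices: given two solutions $u_1,u_2$ of \eqref{eq8a}, testing the equation for $u_i$ against $\bigl((u_1+\varepsilon)^m-(u_2+\varepsilon)^m\bigr)/(u_i+\varepsilon)^{m-1}$, subtracting, and letting $\varepsilon\downarrow0$ forces $u_1\equiv u_2$ because $s\mapsto -s^{-p}$ is strictly decreasing.

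\medskip

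For step (b) I would use explicit barriers of the form $\Phi(x)=\phi(\delta(x))$ with
$$
\phi(t)=\begin{cases} t & \text{if } p+q<1,\\ t\,\log^{1/(m+p-1)}(1/t) & \text{if } p+q=1,\\ t^{(m-q)/(m+p-1)} & \text{if } p+q>1. \end{cases}
$$
On a tubular neighbourhood $\Omega_\rho=\{0<\delta<\rho\}$ on which $\delta\in C^2$ one has
$$
-\Delta_m(\phi\circ\delta)=-|\phi'(\delta)|^{m-2}\Bigl((m-1)\phi''(\delta)+\phi'(\delta)\Delta\delta\Bigr).
$$
The profile $\phi$ is chosen precisely so that the leading power of $\delta$ on the left matches $K(x)\phi(\delta)^{-p}\sim \delta^{-q}\phi(\delta)^{-p}$ on the right: in case (iii) both scale like $\delta^{\gamma(m-1)-m}$ with $\gamma=(m-q)/(m+p-1)$, and the hypothesis $p+q<2-(1-p)/m$ is precisely what ensures $\gamma\in(0,1]$. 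Choosing constants $0<c<C$ and shrinking $\rho$ if needed, one checks that $c\Phi$ and $C\Phi$ are, respectively, a sub- and a super-solution of $-\Delta_m w=K(x)w^{-p}$ in $\Omega_\rho$. Matching with the positive minimum/maximum of $u$ on $\{\delta=\rho\}$ and applying the weak comparison principle for $-\Delta_m$ with a decreasing right-hand side yields $u\sim\Phi$ in $\Omega_\rho$, and hence in all of $\Omega$ by strict positivity of $u$ in the interior.

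\medskip

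For step (c) I would promote the pointwise asymptotic into a gradient bound by a rescaling argument. Fix $x_0\in\Omega_\rho$, set $d=\delta(x_0)$, and define $v(y)=u(x_0+dy)/\phi(d)$ on $B_{1/2}$. By (b), $v\sim 1$ on $B_{1/2}$ and satisfies an equation $-\Delta_m v=\widetilde K(y)v^{-p}$ with $\widetilde K$ bounded uniformly in $d$. The interior $C^{1,\alpha}$ estimate of Tolksdorf--DiBenedetto applied to this uniformly non-degenerate equation gives $|\nabla v(0)|\le C$, hence $|\nabla u(x_0)|\le C\,\phi'(\delta(x_0))$. Since $\phi'(t)\sim t^{\gamma-1}$ in case (iii) with $\gamma-1=-(p+q-1)/(m+p-1)$, integrating in polar-type coordinates around $\partial\Omega$ shows that $\int_\Omega|\nabla u|^\tau\,dx<\infty$ precisely when $\tau<(m+p-1)/(p+q-1)$; the logarithmic factor in case (ii) makes the analogous integral finite for every $\tau<\infty$; and in case (i) $|\nabla u|$ is bounded. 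The matching lower bound $u\gtrsim\Phi$ combined with a Hopf-type estimate converts into a comparable lower bound on $|\nabla u|$ in a boundary strip, confirming that the stated range of $\tau$ is optimal.

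\medskip

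I expect the main obstacle to lie in the borderline case $p+q=1$: the exponent $1/(m+p-1)$ of the logarithm has to be calibrated so that the leading terms of $-|\phi'|^{m-2}(m-1)\phi''$ reproduce $K(x)\phi^{-p}$ at the exact rate $\delta^{-1}\log^{-p/(m+p-1)}(1/\delta)$, and the remainder coming from $\phi'(\delta)\Delta\delta$ must be absorbable by tuning the multiplicative constants $c,C$. A secondary difficulty is making the rescaling in step (c) uniform as $d\downarrow 0$; this is handled by a Whitney-type cover of $\Omega_\rho$ by balls of radius comparable to $\delta$, on each of which $u$ is bounded above and below thanks to step (b).
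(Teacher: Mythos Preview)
Your plan is correct but the decisive step (c) follows a genuinely different route from the paper. The paper isolates the $W^{1,\tau}_0$ regularity in a separate Proposition~2.1, whose proof does \emph{not} estimate $|\nabla u|$ directly: instead it solves the auxiliary \emph{linear} problem $-\Delta w=\theta$ (with $\theta=Ku^{-p}$), obtains the pointwise bound $|\nabla w|\le c\,\delta^{1-a}$ via the Crandall--Rabinowitz--Tartar local estimate (Lemma~2.3), and then invokes Iwaniec's theorem (Lemma~2.1): writing $\Delta_m u=\operatorname{div}(\nabla w)$ with $\nabla w\in L^{\tau/(m-1)}$ forces $\nabla u\in L^\tau$. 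Your rescaling argument skips this linear detour and applies DiBenedetto--Tolksdorf interior $C^{1,\alpha}$ regularity to the nonlinear equation on Whitney balls, yielding the pointwise bound $|\nabla u|\lesssim\phi(\delta)/\delta\sim\phi'(\delta)$, which is then integrated. This gives a stronger conclusion (pointwise rather than merely integral gradient control) and is more self-contained, at the price of invoking nonlinear interior regularity; the paper's method is softer but leans on Iwaniec's $L^p$ estimate as a black box. The barrier constructions in step~(b) are essentially the same, the paper using powers of the first $m$-Laplacian eigenfunction $\varphi$ in place of your $\phi(\delta)$. For optimality the paper produces an explicit example (Remark~2.4) rather than a gradient lower bound; a cleaner alternative to your Hopf idea is Hardy's inequality: $u\in W^{1,\tau}_0$ forces $u/\delta\in L^\tau$, and the two-sided estimate $u\sim\delta^\gamma$ then gives $\tau<1/(1-\gamma)=(m+p-1)/(p+q-1)$ directly.
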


\section{Auxiliary results}
The key result in proving Theorem \ref{tm1} is the following:
\begin{proposition}\label{p1}
Let $u\in W_{0}^{1, m}(\Omega)\cap C(\overline{\Omega})$, $m>1$ satisfy 
\begin{equation}\label{eq1}
\left\{
\begin{aligned}
-\Delta_{m}u&= \theta(x) &&\quad\mbox{ in }\Omega,\\
u&= 0 &&\quad\mbox{ on }\partial{\Omega},
\end{aligned}
\right.
\end{equation}
where $\theta: \Omega \rightarrow (0, \infty)$ is a continuous function.
\begin{itemize}
\item[{\rm (i)}] If $\theta(x)\sim \delta(x)^{-a}$ for $a\in \Big(1, 2-\frac{1}{m}\Big)$ then $u\in W_{0}^{1, p}(\Omega)$ for all $p\in \Big[m, \frac{m-1}{a-1}\Big)$.
\item[{\rm (ii)}] If $\theta(x)\sim \delta(x)^{-1}log^{-a}\Big(\frac{1}{\delta(x)}\Big)$ for $a\in (0, 1)$ then $u\in W_{0}^{1, p}(\Omega)$ for all $p\in [m, \infty)$.
\end{itemize}
\end{proposition}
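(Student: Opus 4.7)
My plan is to reduce the $W_0^{1,p}$-regularity claim to pointwise control of $|\nabla u|$ by $\delta^{s-1}$ (or $\log^b(1/\delta)$ in the log case), proceeding in three stages: construct barriers that pin down the boundary behaviour of $u$; upgrade this to a pointwise gradient bound via interior regularity and a rescaling argument; and integrate the resulting bound.

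\emph{Step 1: boundary behaviour of $u$.} In case (i) set $s:=(m-a)/(m-1)\in(0,1)$, so that $(s-1)(m-1)=1-a$. In a one-sided tubular neighbourhood $\Omega_\eta=\{\delta<\eta\}$, $\delta\in C^2$ with $|\nabla\delta|=1$, and a direct differentiation gives
$$
-\Delta_m(A\delta^s)=A^{m-1}s^{m-1}(1-s)(m-1)\delta^{-a}-A^{m-1}s^{m-1}\delta^{1-a}\Delta\delta.
$$
The first term strictly dominates with positive coefficient, so for $A$ small (respectively large), after correcting the values on $\{\delta=\eta\}$ by a continuous additive constant, one obtains a sub- and a super-solution $\underline w$, $\overline w$ of \eqref{eq1} in $\Omega_\eta$ comparable to $u$ on $\partial\Omega_\eta$. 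The weak comparison principle for the $m$-Laplacian, tested against $(\underline w-u)^+$ and $(u-\overline w)^+$ (both compactly supported in $\Omega_\eta$ and controlled by $C\delta^s$), is justified because $\int_{\Omega_\eta}\theta\,\delta^s\lesssim\int_{\Omega_\eta}\delta^{s-a}\,dx<\infty$ precisely under the hypothesis $a<2-1/m$. This yields
$$
c_1\,\delta(x)^s\le u(x)\le c_2\,\delta(x)^s \qquad \text{in }\Omega_\eta.
$$
Case (ii) runs along the same lines with the ansatz $v=\delta\log^b(1/\delta)$, $b=(1-a)/(m-1)>0$: tracking leading and sub-leading terms in $-\Delta_m v$ shows the dominant balance is $b^{m-1}(m-1)\delta^{-1}\log^{-a}(1/\delta)$, yielding $u\sim\delta\log^b(1/\delta)$ near $\partial\Omega$.

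\emph{Step 2: pointwise gradient estimate by rescaling.} Given $x_0\in\Omega_\eta$ set $r=\delta(x_0)/2$ and $\tilde u(y)=r^{-s}\,u(x_0+ry)$ on $B_1(0)$. Using the identity $(1-s)(m-1)+1=a$ one obtains
$$
-\Delta_m\tilde u(y)=r^a\,\theta(x_0+ry),\qquad |y|<1,
$$
and Step 1 together with $\theta(x_0+ry)\sim r^{-a}$ gives $\|\tilde u\|_{L^\infty(B_1)}+\|\Delta_m\tilde u\|_{L^\infty(B_1)}\le C$ uniformly in $r$. DiBenedetto's (or Tolksdorf's) interior $C^{1,\alpha}$ estimate then yields $|\nabla\tilde u(0)|\le C$, which rescales to
$$
|\nabla u(x_0)|\le C\,\delta(x_0)^{s-1}\qquad\text{for every }x_0\in\Omega_\eta.
$$
The same argument, with the rescaling adjusted to $\tilde u(y)=u(x_0+ry)/[r\log^b(1/r)]$, delivers $|\nabla u(x_0)|\le C\log^b(1/\delta(x_0))$ in case (ii).

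\emph{Step 3: integration and main obstacle.} Splitting $\int_\Omega|\nabla u|^p\,dx=\int_{\Omega_\eta}+\int_{\Omega\setminus\Omega_\eta}$, the far-field integral is controlled by the interior $C^{1,\alpha}$-regularity of $u$, while the near-boundary integral is bounded by $C\int_{\Omega_\eta}\delta^{(s-1)p}\,dx$, which converges exactly when $(s-1)p>-1$, i.e.\ $p<(m-1)/(a-1)$, giving (i). In (ii) the logarithmic majorant lies in every $L^p$ near the boundary, yielding $u\in W_0^{1,p}(\Omega)$ for all $p\in[m,\infty)$. The delicate step is Step 1: the singular forcing $\theta\notin L^1(\Omega)$ when $a\ge1$ forces the comparison principle to be applied with test functions compactly supported in $\Omega_\eta$, and the convergence of the resulting singular integrals rests on the sharp condition $a<2-1/m$; in part (ii) the analogous computation requires careful bookkeeping of sub-leading logarithmic terms to identify the correct exponent $b$ and verify the sign of the dominant coefficient.
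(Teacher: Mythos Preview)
Your argument is correct but follows a genuinely different route from the paper's. The paper never estimates $\nabla u$ pointwise. Instead it introduces the \emph{linear} auxiliary problem $-\Delta w=\theta$ in $\Omega$, $w=0$ on $\partial\Omega$, builds barriers $c^{\pm1}\phi^{2-a}$ (resp.\ $c^{\pm1}\phi\log^{1-a}(A/\phi)$) for $w$ from the first Dirichlet eigenfunction $\phi$ of $-\Delta$, and then uses a classical interior $W^{2,p}$ estimate for the Laplacian (Lemma~\ref{l2}) to obtain $|\nabla w|\le c\,\delta^{1-a}$ (resp.\ $c\log^{1-a}(A/\delta)$). The final step is Iwaniec's Calder\'on--Zygmund type estimate for the $m$-Laplacian (Lemma~\ref{l1}): since $\Delta_m u=\Delta w=\mathrm{div}(\nabla w)$, one takes $\Phi=\nabla w$ and reads off $\nabla u\in L^p$ whenever $\nabla w\in L^{p/(m-1)}$, which gives exactly $p<\tfrac{m-1}{a-1}$.

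The trade-offs: the paper's device of passing through the linear problem lets the comparison principle and the interior gradient bound live entirely on the Laplacian side, where they are classical and unproblematic; the nonlinearity is handled in one stroke by Iwaniec's lemma. Your approach is more self-contained---it dispenses with Iwaniec's result altogether and already produces the sharp boundary rate $u\sim\delta^{(m-a)/(m-1)}$ as a byproduct---but, as you correctly flag, it shifts the burden to justifying weak comparison for $-\Delta_m$ against barriers when $\theta\notin L^1(\Omega)$, and it relies on the quantitative DiBenedetto/Tolksdorf $C^{1,\alpha}$ interior estimate in place of the elementary Lemma~\ref{l2}. Both routes reach the same optimal range of $p$.
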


\medskip

We shall use the following results:
\begin{lemma}\label{l1} {\rm (see \cite[Theorem 2]{I1983}). }
Assume $p\geq m> 1$, $u\in W_{0}^{1, m}(\Omega)$ and $\Phi \in L^{\frac{p}{m-1}}(\Omega; \R^{N})$ satisfy 
$$
\Delta_{m}u= div(\Phi) \quad\mbox{ in }\Omega.
$$
Then $\nabla u\in L^{p}(\Omega; \R^{N})$ and there exist $c= c(m, p, N)$ such that
$$
||\nabla u||_{L^{p}(\Omega)}^{m-1}\leq c||\Phi||_{L^{\frac{p}{m-1}}(\Omega)}.
$$
\end{lemma}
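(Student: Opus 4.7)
The plan is to prove the estimate in two stages: first handle the endpoint $p=m$ via the energy identity, then reach arbitrary $p\in[m,\infty)$ by combining a Caccioppoli inequality with Gehring self-improvement and Iwaniec's nonlinear Hodge projection. The case $p=m$ is immediate: since $u\in W_0^{1,m}(\Omega)$ is itself an admissible test function for the distributional equation $\Delta_m u=div(\Phi)$, integration by parts and H\"older's inequality give
\[
\int_\Omega |\nabla u|^m\,dx = \int_\Omega \Phi\cdot\nabla u\,dx \leq ||\Phi||_{L^{m/(m-1)}(\Omega)}\,||\nabla u||_{L^m(\Omega)},
\]
and hence $||\nabla u||_{L^m(\Omega)}^{m-1}\leq ||\Phi||_{L^{m/(m-1)}(\Omega)}$, which is the claim when $p=m$.

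For $p$ slightly larger than $m$ I would derive a Caccioppoli-type inequality. On any ball $B_{2R}\Subset \Omega$, testing with $(u-\bar u_{B_{2R}})\eta^m$ for a cutoff $\eta\in C_c^\infty(B_{2R})$ with $\eta\equiv 1$ on $B_R$, together with Sobolev-Poincar\'e, yields a reverse H\"older inequality
\[
\left(\frac{1}{|B_R|}\int_{B_R}|\nabla u|^m\right)^{\!1/m}\leq C\left(\frac{1}{|B_{2R}|}\int_{B_{2R}}|\nabla u|^s\right)^{\!1/s}+C\left(\frac{1}{|B_{2R}|}\int_{B_{2R}}|\Phi|^{\frac{m}{m-1}}\right)^{\!\frac{m-1}{m}}
\]
for some $s<m$, and an analogous estimate at boundary half-balls using $u=0$ on $\partial\Omega$ together with a flattening of $\partial\Omega$. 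Gehring's lemma then produces some $\varepsilon_0>0$ and a global bound $||\nabla u||_{L^p(\Omega)}^{m-1}\leq C\,||\Phi||_{L^{p/(m-1)}(\Omega)}$ for all $p\in[m,m+\varepsilon_0]$.

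To extend the range to every $p\in[m,\infty)$, I would employ Iwaniec's nonlinear Hodge projection: for exponents $r$ in a neighborhood of $\frac{m}{m-1}$, any $\Phi\in L^r(\Omega;\R^N)$ admits a decomposition $\Phi=|\nabla w|^{m-2}\nabla w + H$ with $div(H)=0$, $w\in W_0^{1,r(m-1)}(\Omega)$, and $||\nabla w||_{L^{r(m-1)}(\Omega)}^{m-1}\leq C\,||\Phi||_{L^r(\Omega)}$. Substituting into the equation gives $div\bigl(|\nabla u|^{m-2}\nabla u-|\nabla w|^{m-2}\nabla w\bigr)=div(H)$, and strict monotonicity of $\xi\mapsto|\xi|^{m-2}\xi$ combined with a duality argument (testing the difference against $u-w$ and absorbing) transfers the integrability from $\nabla w$ to $\nabla u$, producing $||\nabla u||_{L^{r(m-1)}}^{m-1}\leq C\,||\Phi||_{L^r}$. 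Iterating across overlapping Gehring neighborhoods in the variable $r$ covers the full range $p=r(m-1)\in[m,\infty)$.

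The main obstacle is establishing the $L^r$-continuity of the nonlinear Hodge projection for $r$ away from $\frac{m}{m-1}$: the energy identity only supplies it at the single exponent $r=\frac{m}{m-1}$, and the projection itself is genuinely nonlinear. Iwaniec's resolution uses complex interpolation between the Riesz transform (which is bounded on $L^r$ for every $1<r<\infty$) and the nonlinear substitution $\xi\mapsto|\xi|^{m-2}\xi$, equivalently a Coifman-Rochberg-Weiss type commutator estimate, to extend continuity to a wide range of $r$ with constants depending only on $m$, $r$, $N$, and the geometry of $\Omega$. Once this ingredient is in place the bootstrap closes and produces the claimed bound with $c=c(m,p,N)$.
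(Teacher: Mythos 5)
The first thing to note is that the paper does not prove this statement at all: Lemma \ref{l1} is quoted directly from Iwaniec \cite[Theorem 2]{I1983} and used as a black box, so there is no internal proof to compare yours against. Judged on its own terms, your proposal is correct and complete at the endpoint $p=m$ (testing with $u$ itself and applying H\"older is exactly right, and nothing more is needed there), and the Caccioppoli--Gehring stage is a standard way to obtain the range $p\in[m,m+\varepsilon_0]$ for some small $\varepsilon_0$.

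The genuine gap is in the third stage, and it is a gap of circularity rather than of missing detail. The decomposition you invoke --- $\Phi=|\nabla w|^{m-2}\nabla w+H$ with $div(H)=0$, $w\in W_0^{1,r(m-1)}(\Omega)$, and $||\nabla w||_{L^{r(m-1)}(\Omega)}^{m-1}\leq C||\Phi||_{L^{r}(\Omega)}$ --- is not an independent tool: constructing such a $w$ means solving $div(|\nabla w|^{m-2}\nabla w)=div(\Phi)$ with zero boundary values and proving that its gradient lies in $L^{r(m-1)}$ with exactly that norm bound, which is word for word the lemma you are trying to prove. Your own monotonicity argument makes this explicit: testing against $u-w$ shows $\nabla u=\nabla w$ a.e., so the ``transfer of integrability'' step is trivial and the entire content of the lemma sits inside the claimed $L^r$-continuity of the nonlinear Hodge projection. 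At precisely that point you write that ``Iwaniec's resolution uses complex interpolation \dots a Coifman--Rochberg--Weiss type commutator estimate,'' i.e.\ you defer the one step carrying all the difficulty to the very reference the paper cites. Relatedly, ``iterating across overlapping Gehring neighborhoods'' is not a valid mechanism on its own: Gehring's lemma self-improves a reverse H\"older inequality by a fixed small amount that you cannot control or compound, and it is exactly because such iteration cannot reach arbitrary exponents that Iwaniec's projection theorem is a substantial result. In short, your reduction of the lemma to the projection bound is clean and correct, but the projection bound \emph{is} the lemma, so as a standalone proof the proposal does not close.
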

\begin{lemma}\label{l2}{\rm (see \cite[Lemma 2]{CRT1977} or \cite[Lemma 4.4]{BGH2015}). }
There exist $c>0$ such that if $B_{2r}(x_{0})\subset \Omega$, $0< r\leq 1$ and $v\in W_{0}^{2, p}(\Omega)$, for some $p> N$, then
\begin{equation}
||\nabla v||_{L^{\infty}(B_{r}(x_{0}))}\leq c\Big[r||\Delta v||_{L^{\infty}(B_{2r}(x_{0}))}+ \frac{1}{r}||v||_{L^{\infty}(B_{2r}(x_{0}))}\Big].
\end{equation}
\end{lemma}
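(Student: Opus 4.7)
The plan is a two-step strategy: first a scaling reduction to fixed radius $r=1$, then the fixed-radius estimate from Calder\'on--Zygmund theory together with Morrey's embedding (which applies since $p>N$).

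For the rescaling, I would set $w(y):=v(x_0+ry)$ for $y\in B_2:=B_2(0)$. Then $\nabla w(y)=r\,\nabla v(x_0+ry)$ and $\Delta w(y)=r^2\,\Delta v(x_0+ry)$, so
$$
\|\nabla w\|_{L^\infty(B_1)}=r\|\nabla v\|_{L^\infty(B_r(x_0))},\quad \|\Delta w\|_{L^\infty(B_2)}=r^2\|\Delta v\|_{L^\infty(B_{2r}(x_0))},\quad \|w\|_{L^\infty(B_2)}=\|v\|_{L^\infty(B_{2r}(x_0))}.
$$
Multiplying the stated inequality through by $r$ makes it equivalent to the scale-free bound
$$
\|\nabla w\|_{L^\infty(B_1)}\leq c\big[\|\Delta w\|_{L^\infty(B_2)}+\|w\|_{L^\infty(B_2)}\big]
$$
for every $w\in W^{2,p}(B_2)$ with $p>N$. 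The hypothesis $0<r\leq 1$ is needed only to ensure $B_{2r}(x_0)\subset\Omega$ and plays no further role, since the displayed inequality is genuinely scale-invariant.

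For the fixed-radius step I would fix a cut-off $\eta\in C_c^\infty(B_2)$ with $\eta\equiv 1$ on $B_{3/2}$ and $0\leq\eta\leq 1$, extend $\eta w$ by zero to $\R^N$, and use
$$
\Delta(\eta w)=\eta\,\Delta w+2\nabla\eta\cdot\nabla w+w\,\Delta\eta.
$$
The Calder\'on--Zygmund estimate on the Newtonian potential gives $\|\nabla^2(\eta w)\|_{L^p(\R^N)}\leq c\|\Delta(\eta w)\|_{L^p(\R^N)}$, hence
$$
\|\nabla^2(\eta w)\|_{L^p(\R^N)}\leq c\big[\|\Delta w\|_{L^\infty(B_2)}+\|\nabla w\|_{L^p(B_2)}+\|w\|_{L^\infty(B_2)}\big],
$$
where I have used that on the fixed ball $B_2$ the $L^\infty$ norm dominates the $L^p$ norm up to a constant depending only on $N$. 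The stray $\|\nabla w\|_{L^p(B_2)}$ is eliminated by the Gagliardo--Nirenberg/Ehrling interpolation $\|\nabla w\|_{L^p}\leq\varepsilon\|\nabla^2 w\|_{L^p}+C_\varepsilon\|w\|_{L^p}$, applied on a slightly larger intermediate ball together with a standard iteration over nested concentric balls $B_1\subset B_{3/2}\subset B_{7/4}\subset B_2$, which allows one to absorb the second-derivative term back into the left-hand side. Finally, since $p>N$, Morrey's embedding $W^{2,p}(B_{3/2})\hookrightarrow C^1(\overline{B_1})$ converts the resulting $W^{2,p}$-bound on $\eta w$ into an $L^\infty$-bound on $\nabla w=\nabla(\eta w)$ over $B_1$, proving the scale-free estimate.

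The main obstacle is the interplay between the $L^p$ nature of Calder\'on--Zygmund and the $L^\infty$ norms appearing on both sides of the target inequality. The passage $L^\infty\to L^p$ on $B_2$ is trivial once the ball has a fixed size (which is precisely why the rescaling step is essential), and the reverse passage $W^{2,p}\to C^1$ is exactly Morrey's embedding, which requires the hypothesis $p>N$. The delicate point is the absorption of $\|\nabla w\|_{L^p}$: a single cut-off estimate always leaves a first-order term on the right, and only the combination of Gagliardo--Nirenberg interpolation with the iteration over concentric balls allows this term to be removed without spoiling the final dependence on just $\|\Delta v\|_{L^\infty}$ and $\|v\|_{L^\infty}$.
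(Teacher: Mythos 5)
Your proposal is correct, but note that the paper itself contains no proof of this lemma: it is quoted directly from \cite[Lemma 2]{CRT1977} (see also \cite[Lemma 4.4]{BGH2015}), so the only meaningful comparison is with the classical argument in those sources. Your scaling step is exactly right---the inequality is scale-invariant, multiplying through by $r$ reduces matters to the unit scale, and $0<r\leq 1$ indeed plays no role beyond placing $B_{2r}(x_0)$ inside $\Omega$. Your fixed-scale step (cut-off, Calder\'on--Zygmund, Ehrling interpolation plus iteration over nested concentric balls to absorb the stray $\|\nabla w\|_{L^p}$, then Morrey's embedding) is the standard proof of interior $W^{2,p}$ estimates followed by $W^{2,p}\hookrightarrow C^1$ for $p>N$, and you correctly identify the absorption of the first-order term as the delicate point. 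However, this machinery is heavier than what the estimate requires. The classical route is: on $B_2$ split $w=h+\Phi$, where $\Phi(x)=\int_{B_2}\Gamma(x-y)\Delta w(y)\,dy$ is the Newtonian potential of $\Delta w$ and $h=w-\Phi$ is harmonic in $B_2$. Since $\nabla\Gamma$ is locally integrable, $\|\nabla\Phi\|_{L^\infty(B_2)}+\|\Phi\|_{L^\infty(B_2)}\leq C(N)\|\Delta w\|_{L^\infty(B_2)}$, and the interior derivative estimate for harmonic functions gives $\|\nabla h\|_{L^\infty(B_1)}\leq C(N)\|h\|_{L^\infty(B_2)}\leq C(N)\bigl(\|w\|_{L^\infty(B_2)}+\|\Delta w\|_{L^\infty(B_2)}\bigr)$; adding the two pieces finishes the proof. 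This avoids Calder\'on--Zygmund theory, interpolation and the iteration lemma entirely, and yields a constant depending only on $N$, whereas your constant also depends on $p$; in this lighter argument the hypothesis $p>N$ is needed only to guarantee that $\nabla v$ is continuous, so that the left-hand side is meaningful pointwise. In short: your proof works, but everything after the rescaling can be replaced by a one-line potential-theoretic decomposition.
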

\begin{lemma}\label{l3} {\rm (see \cite{LM1991}). }
Let $\Omega \subset \R^{N}$ be a bounded and smooth domain. Then
$$
\int_{\Omega}\delta(x)^{-a}dx< \infty \quad\mbox{ if and only if } a< 1.
$$
\end{lemma}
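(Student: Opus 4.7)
The plan is to reduce the integral to a one–dimensional model problem on an interval by using a tubular neighborhood parametrization of $\partial\Omega$, where the problem $\int_0^\varepsilon t^{-a}\,dt<\infty \iff a<1$ is elementary.

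First I would split the domain as $\Omega=\Omega_\varepsilon\cup(\Omega\setminus\Omega_\varepsilon)$, where $\Omega_\varepsilon=\{x\in\Omega:\delta(x)<\varepsilon\}$ for some $\varepsilon>0$ small. On $\Omega\setminus\Omega_\varepsilon$ the function $\delta$ is bounded below by $\varepsilon$ (and above by $\mathrm{diam}(\Omega)$), so $\int_{\Omega\setminus\Omega_\varepsilon}\delta(x)^{-a}dx$ is finite whatever the sign of $a$. Hence the finiteness of $\int_\Omega \delta^{-a}$ is equivalent to the finiteness of $\int_{\Omega_\varepsilon}\delta^{-a}$, and all the action is in the boundary layer.

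Next, using that $\partial\Omega$ is smooth (and compact) I would invoke the tubular neighborhood theorem: for $\varepsilon>0$ sufficiently small the map
\[
\Psi:\partial\Omega\times(0,\varepsilon)\to\Omega_\varepsilon,\qquad \Psi(y,t)=y-t\,\nu(y),
\]
with $\nu(y)$ the outer unit normal at $y$, is a $C^1$–diffeomorphism, and $\delta(\Psi(y,t))=t$. Its Jacobian $J(y,t)$ is continuous on the compact set $\partial\Omega\times[0,\varepsilon]$ and satisfies $c_1\le J(y,t)\le c_2$ for constants $0<c_1\le c_2$ (this is where smoothness is used, via the boundedness of the second fundamental form of $\partial\Omega$). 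Applying the change of variables and Fubini gives
\[
\int_{\Omega_\varepsilon}\delta(x)^{-a}\,dx
\;\sim\; \int_{\partial\Omega}\!\!\int_0^\varepsilon t^{-a}\,dt\,d\sigma(y)
\;=\; \mathcal{H}^{N-1}(\partial\Omega)\int_0^\varepsilon t^{-a}\,dt,
\]
with equivalence constants $c_1,c_2$. Since $\mathcal{H}^{N-1}(\partial\Omega)<\infty$ and $\int_0^\varepsilon t^{-a}dt$ is finite iff $a<1$, both implications of the lemma follow at once.

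The main obstacle is making the tubular neighborhood argument precise: one must check that $\Psi$ is well defined and injective for small $\varepsilon$, compute (or at least bound) its Jacobian, and verify that $\delta\circ\Psi(y,t)=t$. All of this is standard for $C^2$ domains and can be quoted, so the remainder of the proof is essentially the 1D computation.
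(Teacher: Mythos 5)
The paper gives no proof of this lemma at all: it is quoted as a known result from Lazer--McKenna \cite{LM1991}, so there is no internal argument to compare yours against. Your proof is correct and is essentially the standard self-contained verification of this fact. The split into the boundary layer $\Omega_\varepsilon$ and the compactly contained remainder correctly isolates the singularity (and handles every sign of $a$ at once); the tubular neighborhood map $\Psi(y,t)=y-t\,\nu(y)$ with $\delta(\Psi(y,t))=t$ is a legitimate diffeomorphism for $\varepsilon$ below the reach of the smooth compact boundary, with Jacobian $\prod_{i=1}^{N-1}\bigl(1-t\kappa_i(y)\bigr)$ uniformly close to $1$, hence pinched between positive constants; and since the integrand is nonnegative, Tonelli lets the comparison run in both directions, so the two-sided Jacobian bound really does give the ``if and only if'': convergence for $a<1$ from the upper constant, divergence for $a\geq 1$ from the lower one. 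The only points that would need care in a full write-up are the ones you flag yourself: $\partial\Omega$ must be at least $C^2$ so that $\nu$ is $C^1$ and the tubular neighborhood theorem applies, and injectivity of $\Psi$ requires $\varepsilon$ small; both hold under the paper's smoothness hypothesis. In short, your route trades the paper's one-line citation for a short, complete argument --- a reasonable trade, since the lemma is exactly the kind of folklore fact whose proof is easier to reconstruct than to track down in the cited reference.
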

\begin{proof}[Proof of Proposition 2.1]
Let $w\in C^{2}(\Omega)\cap C(\overline{\Omega})$ satisfy 
\begin{equation}\label{eq2}
\left\{
\begin{aligned}
-\Delta w&= \theta(x) &&\quad\mbox{ in }\Omega,\\
w&=0 &&\quad\mbox{ on }\partial{\Omega}.
\end{aligned}
\right.
\end{equation}
Denote by $\phi>0$ the first eigenfunction of $-\Delta$ in $\Omega$.
\begin{itemize}
\item[{\rm (i)}] Assume $\theta(x)\sim \delta(x)^{-a}$ for some $a\in \Big(1, 2-\frac{1}{m}\Big)$. Then
$$
\underline{w}(x)= \frac{1}{c}\phi(x)^{2-a} \quad\mbox{ and } \quad \overline{w}(x)= c\phi(x)^{2-a},
$$ 
are respectively sub and supersolutions of \eq{eq2} provided $c>1$ is large enough, so 
\begin{equation}\label{eq3}
w(x)\sim \delta(x)^{2-a}.
\end{equation}
We claim that
\begin{equation}\label{eq4}
|\nabla w(x)| \leq c\delta(x)^{1-a} \quad\mbox{ in }\Omega,
\end{equation}
for some $c>0$. In order to prove this, let $x\in \Omega$ be a fixed point and $r=\frac{\delta(x)}{3}$. Then 
$$
B_{2r}(x)\subset \Omega_{0}= \{ z\in \Omega: \frac{\delta(x)}{3}\leq \delta(z) \leq \frac{5}{3}\delta(x) \} \subset \Omega
$$
and by Lemma \ref{l2} we have:
\begin{equation}\label{eq5}
\begin{aligned}
|\nabla w(x)|\leq c\Big[r||\Delta w||_{L^{\infty}(\Omega_{0})}+ \frac{1}{r}||w||_{L^{\infty}(\Omega_{0})}\Big]\leq c\delta(x)^{1-a}
\end{aligned}
\end{equation}
which proves \eq{eq4}. Using the estimate in \eq{eq4} we deduce that $|\nabla w|\in L^{\frac{p}{m-1}}(\Omega)$ whenever $\delta(x)^{1-a}\in L^{\frac{p}{m-1}}(\Omega)$ and by Lemma \ref{l3} this is equivalent to $p< \frac{m-1}{a-1}$. Using Lemma \ref{l1} with $\Phi= \nabla w$, we conclude the proof.
\item[{\rm (ii)}] Assume now $\theta(x)\sim \delta(x)^{-1}\log^{-a}\Big(\frac{1}{\delta(x)}\Big)$ for some $a\in (0, 1)$. Then 
\begin{equation*}
\underline{w}(x)= \frac{1}{c}\phi(x)\log^{1-a}\Big(\frac{A}{\phi(x)}\Big) \quad\mbox{ and } \quad \overline{w}(x)= c\phi(x) \log^{1-a}\Big(\frac{A}{\phi(x)}\Big),
\end{equation*}
are respectively sub and supersolutions of \eq{eq2}, where $A>1$ is large. So,
\begin{equation}\label{eq6}
w(x)\sim \delta(x)\log^{1-a}\Big(\frac{A}{\delta(x)}\Big). 
\end{equation}
Using \eq{eq6} and the similar approach as in part(i), we deduce that 
$$
|\nabla w(x)|\leq c\log^{1-a}\Big(\frac{A}{\delta(x)}\Big) \quad\mbox{ in }\Omega,
$$
where $A> 1+diam(\Omega)$. In particular $|\nabla w|\in L^{p}(\Omega)$ for all $p>1$ which, by Lemma \ref{l1} with $\Phi= \nabla w$, yields $u\in W_{0}^{1, p}(\Omega)$ for all $p\in [m, \infty)$. This finishes the proof of our result.
\end{itemize}
\end{proof}

\begin{remark}
The regularity in Proposition \ref{p1} is optimal. In order to see this, let $(\varphi, \lambda)$ denote the first eigenfunction and eigenvalue of $-\Delta_{m}$ in $\Omega$, that is
\begin{equation}\label{eq7}
\left\{
\begin{aligned}
-\Delta_{m} \varphi&= \lambda |\nabla \varphi|^{m-2}\varphi &&\quad\mbox{ in }\Omega,\\
\varphi&= 0 &&\quad\mbox{ on }\partial{\Omega}.
\end{aligned}
\right.
\end{equation}
It is known that $\lambda>0$, $\varphi \in C^{2}(\Omega)\cap C^{1}(\overline{\Omega})$ and $\varphi$ has constant sign in $\Omega$.  Thus, by normalizing $\varphi$ we may assume $\varphi> 0$ in $\Omega$ and $||\varphi||_{\infty}= 1$. To show that $W_{0}^{1, p}(\Omega)$ regularity in Proposition \ref{p1}(i) is optimal, let $\theta(x)= -\Delta_{m}(\varphi^{2-a})$. Some straightforward calculations yield $\theta(x)\sim \varphi(x)^{-a}\sim \delta(x)^{-a}$. Thus, $w= \varphi^{2-a}$ is a solution of \eq{eq1} with $\theta(x)$ given by $-\Delta_{m}(\varphi^{2-a})$. Clearly $w\in W_{0}^{1, p}(\Omega)$  for all $m\leq p< \frac{m-1}{a-1}$, but by Lemma \ref{l3} one has $w\not\in W_{0}^{1, \frac{m-1}{a-1}}(\Omega)$.

\medskip

Similarly, to show that regularity $w\in W_{0}^{1, p}(\Omega)$, $m\leq p< \infty$ is optimal we take $\theta(x)= -\Delta_{m}\Big(\varphi \log^{1-a}\Big(\frac{A}{\varphi}\Big)\Big)$ where $A>1$ is a large constant. 
\end{remark}

\section{Proof of Theorem \ref{tm1}}
The existence of a solution $u\in W_{0}^{1, m}(\Omega)$ follows from {\rm \cite[Theorem 3.2]{GST2015}}.
\begin{itemize}
\item[{\rm (i)}] If $p+q<1$, then by {\rm \cite[Theorem 2.1]{GST2015}}, we have $u\in C^{1, \alpha}(\overline{\Omega})$, for some $\alpha \in (0, 1)$.
\item[{\rm (ii)}] If $p+q=1$, then by {\rm \cite[Theorem 2.1]{GST2015}}, we have $u\in W_{0}^{1, m}(\Omega)\cap C^{0, \beta}(\overline{\Omega})$ for some $\beta \in (0, 1)$. Also the behaviour $u\sim \delta(x)\log^{\frac{1}{m+p-1}}\Big(\frac{1}{\delta(x)}\Big)$ follows in the same way as in {\rm \cite[Lemma 3.3]{GMS2012}} by noting that
\begin{equation*}
\underline{u}(x)= \frac{1}{c}\varphi(x)\log^{1-a}\Big(\frac{A}{\varphi(x)}\Big) \quad\mbox{ and } \quad \overline{u}(x)= c\varphi(x) \log^{1-a}\Big(\frac{A}{\varphi(x)}\Big),
\end{equation*}
are respectively sub and supersolutions for some large $c>1$. Using the asymptotic behaviour of $u$ we deduce that 
$$
\theta(x)= K(x)u^{-p}(x)\sim \delta(x)^{-1}\log^{-\frac{p}{m+p-1}}\Big(\frac{1}{\delta(x)}\Big).
$$
By Proposition \ref{p1}(ii) it follows that $u\in W_{0}^{1, \tau}(\Omega)$ for all $\tau \in [m, \infty)$.
\item[{\rm (iii)}] If $p+q>1$, then by {\rm \cite[Theorem 2.1]{GST2015}}, we have $u\in W_{0}^{1, m}(\Omega)\cap C^{0, \beta}(\overline{\Omega})$ for some $\beta \in (0, 1)$. Using the fact that
\begin{equation*}
\underline{u}(x)= \frac{1}{c}\varphi(x)^{\frac{m-q}{m+p-1}} \quad\mbox{ and } \quad\overline{u}(x)= c\varphi(x)^{\frac{m-q}{m+p-1}},
\end{equation*}
are respectively  sub and supersolutions of \eq{eq8} for some large $c>1$, we easily deduce that 
$$
u\sim \delta(x)^{\frac{m-q}{m+p-1}}.
$$  
Then
$$
\theta(x)= K(x)u^{-p}(x)\sim \delta(x)^{-\frac{mp+(m-1)q}{m+p-1}},
$$
and note that $a= \frac{mp+(m-1)q}{m+p-1}\in \Big(1, 2-\frac{1}{m}\Big)$. By Proposition \ref{p1}(ii) it follows that $u\in W_{0}^{1, \tau}(\Omega)$ for all $\tau \in \Big[m, \frac{m+p-1}{p+q-1}\Big)$.
\end{itemize}


\begin{thebibliography}{99}
\bibitem{BGH2015} B. Bougherara, J. Giacomoni, J. Hernandez, Existence and regularity of weak solutions for singular elliptic problems, {\it arxiv preprint}.

\bibitem{CRT1977} M. Crandell, P.H. Rabinowitz, L. Tartar, On a Dirichlet problem with a singular nonlinearity, {\it Comm. P.D.E} {\bf 2} (1977), 193--222.

\bibitem{G2010} M. Ghergu, Lane-Emden systems with negative exponents, {\it J.Functional Analysis} {\bf 258} (2010), 3295--3318.

\bibitem{G2011} M. Ghergu, A Lane-Emden system with singular data, {\it Proc. Royal Soc. Edinburgh Sect.A} {\bf 141} (2011),  1279--1294.

\bibitem{GMS2012} J. Giacomoni, H. Maagli, P. Sauvy, Existence of compact support solutions for a quasilinear and singular problem, {\it Differential Integral Equations} {\bf 25} (2012), 629--659.

\bibitem{GST2015} J. Giacomoni, I. Schindler, P. Takac, Singular quasilinear elliptic systems and Holder regularity, {\it Adv. Differential Equations} {\bf 20} (2015), 259--298.

\bibitem{I1983}  T. Iwaniec, Projections onto gradient fields and $L^{p}$-estimates for degenerate elliptic operators, {\it  Studia Math.} {\bf 75} (1983), 293--312.

\bibitem{KZ1999} J. Kinnunen,  S. Zhou, A local estimate for nonlinear equations with discontinuous coefficients, {\it Comm. P.D.E} {\bf 24} (1999), 2043--2068.

\bibitem{LM1991} A.C. Lazer, P.J. McKenna, On a singular nonlinear elliptic boundary value problem, {\it Proc. Amer. Math. Soc.} {\bf 111} (1991), 721–-730.

\end{thebibliography}
\end{document}